\newtheorem{thm}{Theorem}
\newtheorem{cor}{Corollary}
\newcommand{\Ev}[1]{\mathcal{E}_{#1}}
\newcommand{\Pro}[1]{P_{\Omega_{#1}}}
\newcommand{\IO}[1]{I_{\Omega_{#1}}}
\newcommand{\RR}{{\mathbb{R}}}
\newcommand{\NN}{{\mathbb{N}}}
\newcommand{\ZZ}{{\mathbb{Z}}}
\newcommand{\Ker}{{\mathfrak{K}}}
\newcommand{\dkoop}{{\mathfrak{U}}}
\newcommand{\koop}{U}
\title{\LARGE \bf
Intrinsic and Extrinsic  Approximation  of \\ Koopman Operators over Manifolds
}
\author{Sai Tej  Paruchuri$^{1,*}$, Jia Guo$^{2,*}$, Michael Kepler$^{3,\dagger}$, Tim Ryan$^{*}$, Haoran Wang$^{*}$,\\ Andrew J. Kurdila$^{4,*}$, and  Daniel Stilwell$^{5,\dagger}$ % <-this % stops a space
\noindent \thanks{\noindent \newline$^1$PhD candidate, {\tt \small saitep@vt.edu},  \newline 
    $^2$PhD candidate, {\tt \small jguo18@vt.edu}, \newline 
    $^3$PhD candidate, {\tt \small mkepler@vt.edu}, \newline 
    $^4$W. Martin  Johnson Professor, {\tt\small kurdila@vt.edu}, \newline
    $^5$Professor, {\tt\small stilwell@vt.edu}, \newline
    $^{*}$Mechanical Engineering Dept., Virginia Tech, Blacksburg, VA \newline
    $^{\dagger}$Electrical and Computer Engineering Dept., Virginia Tech, Blacksburg, VA \newline
        }%
% \thanks{P. Misra is with the Department of Electrical Engineering, Wright State University,
%         Dayton, OH 45435, USA
%         {\tt\small pmisra@cs.wright.edu}}%
}
\begin{document}

\maketitle
\thispagestyle{empty}
\pagestyle{empty}

%%%%%%%%%%%%%%%%%%%%%%%%%%%%%%%%%%%%%%%%%%%%%%%%%%%%%%%%%%%%%%%%%%%%%%%%%%%%%%%%
\begin{abstract}

This paper derives rates of convergence of certain approximations of the Koopman operators that are associated with discrete, deterministic, continuous semiflows on a complete metric space $(X,d_X)$. Approximations are constructed in terms of reproducing kernel bases that are centered at samples taken along the system trajectory.   It is proven that when the samples are dense in a certain type of smooth manifold $M\subseteq X$, the derived rates of convergence depend on the fill distance of samples along the trajectory in that manifold. Error bounds for projection-based and data-dependent approximations of the Koopman operator are derived in the paper. A discussion of how these bounds are realized in intrinsic and extrinsic approximation methods is given. Finally, a numerical example that illustrates qualitatively the convergence guarantees derived in the paper is given.  

\end{abstract}

%%%%%%%%%%%%%%%%%%%%%%%%%%%%%%%%%%%%%%%%%%%%%%%%%%%%%%%%%%%%%%%%%%%%%%%%%%%%%%%%
\section{INTRODUCTION}
\label{sec:introduction}
Over the past decade an extensive literature has been archived on Koopman theory,  and more generally on data-dependent approaches, for modeling various types of nonlinear systems. An idea of the breadth of applications of the theory can be gained by considering the work in   \cite{klus2016,ss2013} for studies of molecular dynamics,  or \cite{sra2019, zrdc2019,kh2018, gkks2018,hrdc2017} for applications to the study of fluid flows, or \cite{tlld2015, tbd2015} in the atmospheric sciences.  A good account of the basics  underlying Koopman theory can be found in texts like \cite{lasota} or \cite{nagelergodic}. Recent notable references that study the general  methodology of Koopman theory, with an emphasis on topics related to approximation theory,  include  \cite{gdosjzz2019, g2019, gkes2019, rabk2019,du2019,cbk2019,lv2018,klus2018,klus2020,pd2018,mcm2018,dmm2018,bldk2018,hnfdl2017,ak2017,kbbp2016a, pbk2016,wkc2015,pw2015,korda2018,kurdila2018,dgs2019,dasgiannakis2019,ag2019}. All of these latter papers have appeared over the past five years. 

The motivation for employing Koopman methods is now well-known: the theory provides an approach to the study of uncertain systems that makes extensive use of operator theory to enhance the understanding of the unknown dynamics. The theory is generally applicable to, indeed in a sense expressly designed for, the study of nonlinear systems. Koopman theory provides an elegant framework in which to carry out analysis of uncertain nonlinear dynamics as well as to develop  data-driven algorithms for modeling and identification of such systems.  To be sure, there are both theoretical and pragmatic reasons for the popularity of Koopman methods.

As explained well in a number of other references such as \cite{koopmanism},  and in greater detail than is possible in this short conference paper, there is a fundamental trade-off in applying Koopman theory to a given nonlinear system.  If we have  a nonlinear system whose dynamics is poorly understood, Koopman theory {\em in principle} entails replacing the study of the system of interest, which is nonlinear and finite dimensional, with one that is linear and infinite dimensional. Since practical considerations   dictate that finite dimensional representations are needed, questions regarding the convergence of approximations must be addressed in any full understanding of Koopman theory. 
 
Unfortunately, many of the finer points regarding the convergence of approximations of the Koopman operator are necessarily nuanced.
The  large number of explicitly cited  papers above  that have appeared over the past five years or so have carefully studied various questions related to convergence  of Koopman approximations. 
In general, these studies  build approximations of quantities or mathematical  objects associated with the unknown flow from samples or  observations. The approximations can take the form of   estimates or predictors  of the state, estimates  of an observable function, or approximations of  the propagation law of the dynamical system itself, among other examples.  These references study a diverse number of cases and provide numerous precise sufficient conditions that guarantee that  convergence is achieved as the dimension $n$  of the space of approximants $n$ and/or the number $m$ of samples  approach infinity.

As motivation for this paper, it is useful to compare this  state of the art in Koopman theory to that in the field of evolutionary partial differential equations (PDEs) or nonlinear regression. Several decades of research in these  fields has resulted in a rich theory that relates rates of convergence  of approximations  to the choice of bases. Here, when we refer to rates of convergence we mean error bounds that are explicit in the dimension $n$ of the space of approximants or the number of samples $m$, or both.  As noted  above, sufficient conditions that ensure convergence asymptotically as $n
 \rightarrow \infty$ or $m\rightarrow \infty$  are numerous in Koopman theory, whereas rates of convergence are far less common.  There are many reasons for this. Approximations of the Koopman operator are typically generated using samples along the trajectory of an uncertain dynamical system, and consequently the domain over which approximations are to  be constructed can be unknown {\em a priori}. This means that much of the ``standard machinery'' that is brought to bear in the numerical study of evolutionary PDEs over a known domain 
 -- approximations in piecewise polynomial, finite element, spline, or wavelet spaces --  can be problematic in applications to Koopman operators. Moreover, it is often of primary concern that approximations of the Koopman operator can be used subsequently to generate  approximate models of  dynamics that are somehow consistent with the underlying unknown dynamics. It seems that questions of rates of convergence are of secondary concern perhaps in these applications where it is primarily  desired to obtain approximate dynamics that preserve some structure in the underlying unknown dynamics.

\subsection{Summary of New Results} 
% No matter what the reason, it is fair to say that general and practical methods are needed that describe how the rates of convergence of approximations of Koopman operators depend on the number of samples $m$, dimension of the space of approximants $n$, and the choice of bases used.  
In this paper, a number of new results are derived that make precise the rates of convergence of  approximations of some types of Koopman operators that are associated with deterministic flows on manifolds. 
 \subsubsection{The Problem Setup and  Formulation} We begin the analysis in this paper by assuming that we have  discrete deterministic semiflow on a state space that is a  complete metric space $(X,d_X)$.   Continuity of the semiflow is defined in terms of the metric $d_X$ on the state space $X$. The continuous semiflow is induced by the autonomous recursion 
 \begin{equation}
 \phi_{n+1}=f(\phi_n)
     \label{eq:deterministic}
 \end{equation}
 for some unknown function $f:X\rightarrow X$. 
 Approximation results derived in this paper  are stated for the Koopman operator $\koop_f g:=g\circ f$.  
 We let $\Omega_n:=\{\xi_i \in X \ | \ 1\leq i \leq n\}$ denote a finite set of observations of the state of the system, and the complete set of samples associated with some fixed initial condition 
 is denoted $\Xi:=\Xi(\phi_0):=\bigcup_{n\in \NN} \Omega_n \subset \Gamma^+(\phi_0)$. Here   $\Gamma^{+}(\phi_0):=\bigcup_{i\in \ZZ^+}\phi_i$ is the forward orbit through $\phi_0\in X$. The samples $\Xi$ are assumed to be dense in a limiting set $\Omega$, which may coincide with the entire state space $\Omega=X$,  or it can be a proper subset  $\Omega\subset X$.  One of the essential features of this paper is that the rates of convergence of approximations of the Koopman operator, which apply when it so happens that the limiting set $\Omega$ is a smooth manifold $M$, are given in  terms of the fill distance $h_{\Omega_n,\Omega}$ of the finite collection of samples $\Omega_n$ in the limiting set $\Omega$,
\begin{equation}
    \label{eq:filldistance}
    h_{\Omega_n,\Omega}:=\sup_{x\in \Omega} \min_{\xi_i\in \Omega_n} d_X(x,\xi_i).
\end{equation}
Note that since we want $h_{\Omega_n,\Omega}\rightarrow 0$ as $n\rightarrow \infty$, it must be the case that the limiting set $\Omega$ is bounded in the analysis in this paper.

 Realizations  of approximations to the Koopman operator are built in this paper  using finite dimensional spaces of approximants $H_{\Omega_n}:=\text{span}\{ \Ker_{X,\xi_i} \ | \ \xi_i \in \Omega_n \}$ with $\Ker_{X,\xi_i}(\cdot):=\Ker_X(\xi_i,\cdot)$ the basis function centered at the sample $\xi_i\in X$, where $\Ker_X:X\times X \rightarrow \RR$ is the kernel function that induces the native space of the reproducing kernel Hilbert space $H_X$.  
 \subsubsection{Projection-Based Approximations}
 The first new result of the paper is stated in  Theorem \ref{th:many_M}, and it applies when $\Omega=M$ is in fact a smooth, compact, connected, Riemannian manifold. In this case we  select the native space $H_M$ so that it is continuously embedded in a Sobolev space $W^{t,2}(M)$  of high enough order.   This theorem gives sufficient conditions to ensure that the projection-based Koopman operator $\koop_f^n:=(P_{\Omega_n}(\cdot))\circ f$ satisfies a bound that has the form  
\begin{equation}
\label{eq:summary1}
\|\koop_f g - \koop_f^n g\|_{f^*(W^{s,2}(M))} \lesssim h^{t-s}_{\Omega_n,M} \|g\|_{W^{t,2}(M)}
\end{equation}
for all $g$ in the Sobolev space $  W^{t,2}(M)$, provided that the  limiting set $\Omega$ is in fact a smooth, connected, compact, Riemannian manifold $\Omega:=M$. In this equation the error is measured in the pullback space $f^*(W^{s,2}(M))$, defined in Section \ref{sec:RKHmain}. The ranges for the smoothness  indices $t,s$ are dictated by the Sobolev embedding theorem and the ``many zeros'' theorem (Theorem \ref{th:many_zeros_M}) on manifolds.  The bound in Equation \ref{eq:summary1} as of yet has no analog in the series of recent articles cited above for approximations of the Koopman operator.

Bounds on the error induced by the projection-based approximation $\koop_f^n$ are certainly valuable to understand the ``worst-case'' performance of approximations built from a given finite dimensional space of approximants $H_{\Omega_n}$, and they are also important in their role in studying {\em data-dependent} approximations $\dkoop_f^n g:=P_{\Omega_n}\left ( (P_{\Omega_n} g )\circ f \right )$, discussed next. 

\subsubsection{Data-Dependent Approximations} 
The initial  approximation $\koop_f^n(\cdot):=(P_{\Omega_n}(\cdot))\circ f$ in Equation \ref{eq:summary1}  uses the projection operator $P_{\Omega_n}:H_M\rightarrow H_{\Omega_n}$, but this expression cannot be evaluated unless the function $f$ is known.   As shown  in Section \ref{sec:koop_approx}, the operators $\dkoop_f^n$ can be constructed from the input-output samples $\{(\phi_i,y_i)\}_{1\leq i\leq  n}=\{(\phi_i,f(\phi_i)\}_{1\leq i \leq  n}$ along the discrete trajectory of the system in Equation \ref{eq:deterministic}. It is also worth noting that the realization of the coordinate representation of $\dkoop_f^n$ is closely related to the approximation of the Koopman operator that is defined in terms of the Extended Dynamic Mode Decomposition (EDMD) algorithm \cite{wkc2015},  in the special case  that  the number of samples is equal to the dimension of the space of approximants. 
%This fact is explored in detail in \cite{jiafull2020}.  
The  definition above of $\dkoop_f^n$  makes sense only so long as $(P_{\Omega_n}g)\circ f \subseteq H_M$. Thus, a standing assumption in this case is that the pullback space $f^*(H_M)\subseteq H_M$. Since $(P_{\Omega_n}g)\circ f \in f^*(H_M)$, this structural assumption is enough to ensure that the data-driven operator $\dkoop_f^n$ is well-defined. Theorem \ref{th:many_M_data} is  representative of the type of  bound that can derived in this case. We have a pointwise error bound 
\begin{align*}
|(\koop_f g)(x)&-(\dkoop_f^n g)(x)|  \\
  \hspace*{.5in}  &\leq C_M h^{t-s}_{\Omega_n,M} \biggl  (\|g\|_{W^{t,2}(M)}+ 
 \|\koop_fg\|_{W^{t,2}(M)}\biggr )
\end{align*}
for each $x\in M$ in terms of the fill distance of the samples $\Omega_n$ in the manifold $M$.
Again, this result is novel among  the articles in the recent literature on approximation of Koopman operators. 

\subsubsection{Intrinsic and Extrinsic Approximations}
The representations of the approximations of the Koopman operator in this paper are explicit in terms of the kernel basis $\Ker_{X,\xi_i}$ that is defined over the state space $X$, where $\Ker_X:X\times X\rightarrow \RR$ is the kernel that defines the RKH space $H_X$.  In cases when the samples $\Xi$ are dense in $X$, the kernel basis $\Ker_{X,\xi_i}$ is defined from a kernel defined on all of $X$. A critical feature of the error bounds in the paper is that they are derived by assuming that the kernel induces a native space $H_X$ that is embedded in or equivalent to a Sobolev space. In particular applications coming up with the needed closed form expressions for a kernel can sometimes be difficult. For this reason, we describe both intrinsic and extrinsic realizations of the approximation framework in this paper, which we describe next. 

In all of the theorems developed in this paper, the limiting set $\Omega$ is assumed to be a smooth Riemannian manifold $M:=\Omega$. In some cases the limiting set fills the entire state space $X=M=\Omega$, and in others it is a proper subset $M=\Omega\subset X$.  When the limiting set $\Omega=M$ is in fact the entire state space $X$, it is possible to use an intrinsic approximation method since the manifold is known in this case.  
When we say that an approximation method is intrinsic, we mean that the kernel used in approximations is defined in terms of the intrinsic definition of the manifold $M$. For example, the kernel may be defined in terms of the eigenfunctions of a  differential operator on the manifold.  The approximant spaces in this case require a closed form expression for the kernel, which in turn   requires a closed form expression for the eigenfunctions on the manifold.
Overall, a fine analysis of rates of convergence for intrinsic approximations of functions are  described in the set of papers \cite{HNW,HNRW,HNWpoly}. 
However, despite the attractiveness {\em in principle} of using such an intrinsic method here, such an approach is difficult in building approximations of Koopman operators. Coming up with the required  closed form expressions is a nontrivial task for a general Riemannian 
manifold $M$ and  requires detailed  knowledge of the form of the manifold $M$. Section \ref{sec:examples} examines one case that illustrates the challenges in devising intrinsic approximations, even in the case of simple recursions over a manifold.

However, it is perhaps most usually the case in practical problems that the samples $\Xi$ do not fill the entire state space $X$.  Rather, the limiting set $\Omega$ in which the samples $\Xi$ are dense is typically not known. In this case, even if the limiting set $\Omega$ is a nice smooth manifold, it is impossible to use a kernel basis $\Ker_{M,\xi_i}$ that is defined intrinsically with respect to the  manifold $M:=\Omega$. In this latter case we employ an {\em extrinsic} approximation. A general study of extrinsic methods for approximation of functions can be found in \cite{fuselier}. We choose a kernel  $\Ker_X$ that is well-defined and known on the large state space $X$, and we define a kernel  on the manifold $M$ by restriction.  Even though the manifold $M$ is not known, if we are given samples that reside on $M$, all the coordinate realizations of the approximations of the Koopman operator can still be computed. Moreover, the  rates of approximation above can still be shown to hold when restricted to a regularly embedded  submanifold $M\subset X$. Since the submanifold is a set of zero measure as a subset of $X$, there is some loss of regularity that reduces the guaranteed rate of convergence.  We outline this analysis in Section \ref{sec:examples}.

\section{CONSTRUCTIONS IN RKH SPACES}
\label{sec:RKHmain}
As mentioned in Section \ref{sec:introduction}, $H_X$ is an RKH space of real-valued functions over $X$. In this section, we review relevant definitions and properties of the RKH space $H_X$, the restricted RKH space of functions over the manifold $M \subset X$ $H_M$, and the pullback space $f^*(H_M)$ where $f: M \to M$. This section also includes a brief discussion of the interpolation and the projection operators defined on RKH spaces. 

\subsection{RKH Space $H_X$ and $H_M$ of Functions}
\label{ss: RKH Space}
A symmetric, continuous, real valued function $\mathfrak{K}_X:X\times X \to \mathbb{R}$, is a reproducing kernel if it is a positive type function, i.e. for any finite collection of points $\{\xi_i\}_{1\leq i\leq n}\subseteq X$, the Grammian $\mathbb{K}_{X,n}:= [\mathfrak{K}_X (\xi_i,\xi_j)]$ is a positive semi-definite matrix. All such positive type functions induce an reproducing kernel Hilbert (RKH) space $H_X$ that is defined as
$
H_X := \overline{\text{span}\{\Ker_{X,x} \ | \ x\in X \}},
$
where $\Ker_{X,x}(\cdot)$ is the kernel centered at $x \in X$ and is equal to $\Ker_X(x,\cdot)$. The inner product $(\cdot,\cdot)_{H_X}$ of the Hilbert space $H_X$ is defined as $(\Ker_{X,x}, \Ker_{X,y})_{H_X} := \Ker_X(x,y)$ for any two functions $\Ker_{X,x},\Ker_{X,y} \in H_X$ and for all $x,y\in X$. It satisfies the reproducing property $(f,\Ker_{X,x})_{H_X}=f(x)$ for all $f\in H_X$ and $x\in X$. Not all Hilbert spaces are RKH spaces. A necessary and sufficient condition for a Hilbert space to be an RKH space is the boundedness of the evaluation functional $\mathcal{E}_x:f \to f(x)$ for any $x \in X$. In our analysis, we assume that the evaluation functional is in fact uniformly bounded, i.e. there exists a constant $\bar{k}$ such that $\| \mathcal{E}_x \| \leq \bar{k}$ for all $x \in X$. This assumption guarantees that the RKH space is embedded into the space of continuous function $C(X)$, that is, $H_X\hookrightarrow C(X)$. If the manifold $M = X$, the RKH space $H_M = H_X$. However, when $M \subset X$ and the intrinsic structure of $M$ is not exactly known, we define the space $H_M$ by restricting the kernel $\Ker_X$ to $M \times M$. The restriction of $\Ker_{X}$, $\Ker_M:M \times M \to \RR$, is defined as $\Ker_M(x,y):= \Ker_X |_{M \times M} (x,y)$ for all $x,y \in M$. Naturally, we can define the space $H_M$ using the kernel $\Ker_M$ similar to the way we defined $H_X$. The space $H_M$ is itself an RKH space and its inner product is defined in terms of the kernel $\Ker_M$. Alternatively, if $R_M$ represents the restriction operator to $M$, we can define $H_M$ as $H_M = R_M (H_X) := \{ R_M f | f \in H_X \}$. As mentioned in Section \ref{sec:introduction}, spaces of the form $H_M$ are particularly useful when the samples $\Xi$ of the dynamical system are concentrated in $M$ and not the whole space $X$.

% Given a subset $\Omega \subseteq X$, we define the subspace $H_\Omega \subseteq H_X$ as
% \begin{align*}
% H_\Omega:=\overline{\text{span}\{\Ker_x\ | \ x\in \Omega \subseteq X \}}.
% \end{align*}
% The space $H_\Omega$ is itself an RKH space, and it inherits its inner product $(\cdot,\cdot)_{H_\Omega}$ from the space $H_X$. Note that the functions in the space $H_\Omega$ are defined over $X$ and not just $\Omega$. Spaces of the form $H_\Omega$ are particularly useful when the samples $\Xi$ of the dynamical system are concentrated in $\Omega$ and not the whole space $X$. The set $\Omega$ is called the indexing set of the native space $H_\Omega \subseteq H_X$. 

\subsection{The Pullback RKH Spaces $\gamma^*(H_M)$ for $\gamma:S\rightarrow M$}
\label{ss_pullbackRKHS}
The pullback space $\gamma^*(H_M)$ generated by  the  space of functions $H_M$ and any mapping $\gamma : S \to \mathbb{R}$  is defined to be 
\begin{align}
\gamma^*(H_M) := \left \{ g:S \to \mathbb{R}\  \biggl | \ g = h \circ \gamma, h \in H_M \right \}
\end{align}
for any set $S$. 
By definition, the Koopman operator $U_f$ maps an element of $H_M$ to its pullback space $f^*(H_M)$. When $H_M$ is a general normed vector space with the norm $\| \cdot \|_{H_M}$, the norm of the pullback space is defined as 
\begin{align}
\|g\|_{\gamma^*(H_X)}:=\min \left \{ \|h\|_{H_M} \ \biggl | \ 
g=h\circ \gamma, \ \ h\in H_M
\right \}.
\end{align}
When $H_M$ is an RKH space, which is what we assume in this paper, the pullback space $\gamma^*(H_M)$ is itself an RKH space with the kernel $\Ker_{M,\gamma}$ defined as
\begin{align}
\Ker_{M,\gamma}(\tau,s):=\Ker_M(\gamma(\tau),\gamma(s))
\end{align}
for all $\tau,s \in S$. In other words, the kernel $\Ker_{M,\gamma}$ generates the pullback space $\gamma^*(H_M)$, i.e. $\gamma^*(H_M):=\overline{\text{span}\{\Ker_{M,\gamma,s}\ | \ s\in S\}}$ with $\Ker_{M,\gamma,s}:=\Ker_M(\gamma(s),\gamma(\cdot))$ for each $s\in S$. 
% Likewise, we can use an approach similar to the one above to define the pullback space $\gamma^*(H_M)$. This space is useful in studying  Koopman approximation of dynamical systems whose samples are concentrated in the manifold $M$.

\subsection{Interpolation and Projection}
\label{sec:interp}
The space $H_M$ discussed in the previous subsection is infinite-dimensional and the Koopman operator $U_f$ maps this space to corresponding infinite-dimensional dimensional pullback space $f^*(H_M)$. We define the approximation of the Koopman operator in terms of a certain  finite-dimensional subspace of $H_M$. Let $\Omega_n :=\{\xi_1,\ldots, \xi_n\} \subseteq M$ be a set of $n$ points, and let $H_{\Omega_n} := \text{span}\{\Ker_{M,\xi_i}\ | \ \xi_i \in \Omega_n\}$ be the corresponding RKH space. We define the orthogonal projection operator $P_{\Omega_n} : H_M \to H_{\Omega_n}$ as the unique mapping that satisfies the identity
\begin{align}
((I-P_{\Omega_n})h,g)_{H_M}=0
\end{align}
for all $g\in H_{\Omega_n}$ and $h\in H_M$. The projection operator decomposes the space $H_M$ into $H_M = H_{\Omega_n} \oplus V_{\Omega_n} $, where $V_{\Omega_n} :=\{ f\in H_M \ | \ f|_{\Omega_n}=0 \}$. We define   the interpolation operator $\IO{n}: H_M \rightarrow H_{\Omega_n}$ to be the unique operator that satisfies the interpolation conditions  
$$
(\IO{n} f)(\xi_i)=f(\xi_i)
$$
for all $\xi_i\in \Omega_n$ and $f\in H_M$. For RKH spaces, the interpolation operator is identical  to the projection operator, in other words,  $\IO{n}f=P_{\Omega_n}f$ for all $f \in H_M$.

\subsection{Sobolev Spaces over Riemannian Manifolds $M$}
\label{sec:sobolev_M}
Suppose we have a (smooth) Riemannian manifold $M$ with metric $g_p$ and inner product $(\cdot, \cdot)_{g_p}$ on the tangent space $T_pM$ at point $p \in M$. When $r$ is an integer, the Sobolev space $W^{r,2}(\Omega)$ for a subset $\Omega \subseteq M$ contains all the functions in $L^2(\Omega)$ such that the norm induced by the inner product
\begin{equation}
(f,g)_{W^{k,2}(\Omega)}:=
\sum_{0\leq j \leq r}\int_\Omega
(\nabla^j f, \nabla^jg)_{g,p}d\mu(p)
\label{eq:sob_man}
\end{equation}
is bounded. In the above definition of the inner product, the term $\mu$ is the volume measure on the manifold $M$. Given a set of coordinates $(x^1,\ldots,x^d)$, the volume measure is defined as $d\mu(x):= \sqrt{det(g)} dx^1\ldots dx^d$. For real-valued $r>0$, the Sobolev space $W^{r,2}(\Omega)$ is defined as an interpolation space between the integer order Sobolev space and $L^2(\Omega)$.  A central theorem we use to prove the results of this paper is a simplified version of the ``many zeros'' theorem \cite{wendland,HNW,HNRW} given below. 
\begin{thm}
\label{th:many_zeros_M}
Suppose that $M$ is a smooth $d$-dimensional manifold. Let $t\in \mathbb{R}$ with  $t>d/2$, $s\in \mathbb{N}_0$ with  $0\leq s \leq \lceil t\rceil -1$.
Then there are   constants $h_M,C_M>0$ such that for all $\Omega_n\subset \Omega$ such that the fill distance $h_{\Omega_n,M}\leq h_M$ and  for all $u\in W^{t,2}(M)$ that  satisfies $u|_{{\Omega_n}}=0$, we have 
$$
\|u\|_{W^{s,2}(M)}\leq C_M h_{\Omega_n,M}^{t-s}\|u\|_{W^{t,2}(M)}.
$$
\end{thm}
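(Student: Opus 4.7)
The plan is to reduce the statement to the Euclidean version of the ``many zeros'' theorem on bounded star-shaped domains in $\mathbb{R}^d$, which is the standard scattered data sampling inequality found for example in \cite{wendland}. The Euclidean result states that there are constants $h_0,C_0>0$ such that if $D\subset \mathbb{R}^d$ is a bounded domain satisfying an interior cone condition, $Z\subset D$ has (Euclidean) fill distance at most $h_0$, and $v\in W^{t,2}(D)$ vanishes on $Z$, then $\|v\|_{W^{s,2}(D)}\leq C_0 h^{t-s}\|v\|_{W^{t,2}(D)}$ for $t>d/2$ and $0\le s\le\lceil t\rceil -1$. The problem is to localize $u\in W^{t,2}(M)$ to such Euclidean pieces in a way that respects both the vanishing condition on $\Omega_n$ and the manifold Sobolev norm defined in Equation \ref{eq:sob_man}.

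First I would invoke compactness of $M$ (implicit in the problem, since $h_{\Omega_n,M}\to 0$ requires a bounded limiting set) to choose a finite atlas of charts $\{(U_\alpha,\varphi_\alpha)\}_{\alpha=1}^N$ together with precompact open refinements $W_\alpha\Subset V_\alpha \Subset U_\alpha$ such that $\bigcup_\alpha W_\alpha = M$ and each $\varphi_\alpha(V_\alpha)\subset \mathbb{R}^d$ is a nice bounded domain (say a Euclidean ball) satisfying a uniform cone condition. I would fix a smooth partition of unity $\{\psi_\alpha\}$ subordinate to $\{W_\alpha\}$. Because $M$ is smooth and compact, there are uniform constants $c_1,c_2>0$ such that on each $V_\alpha$ the Riemannian metric $g_p$ and the pulled-back Euclidean metric are comparable, which gives both (i) $c_1\, d_{\mathrm{Eucl}}(\varphi_\alpha(p),\varphi_\alpha(q))\leq d_M(p,q)\leq c_2\, d_{\mathrm{Eucl}}(\varphi_\alpha(p),\varphi_\alpha(q))$ for $p,q\in V_\alpha$, and (ii) equivalence of the manifold Sobolev norm $\|\cdot\|_{W^{r,2}(V_\alpha)}$ with the Euclidean Sobolev norm of the pulled-back function on $\varphi_\alpha(V_\alpha)$ for $0\le r\le \lceil t\rceil$.

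Next I would show that when $h_{\Omega_n,M}$ is small enough, the Euclidean fill distance of $\varphi_\alpha(\Omega_n\cap V_\alpha)$ inside a slightly shrunk copy of $\varphi_\alpha(V_\alpha)$ that still contains $\varphi_\alpha(\overline{W_\alpha})$ is at most $c_2\, h_{\Omega_n,M}$. The Euclidean many-zeros theorem applied to $u\circ \varphi_\alpha^{-1}$ on $\varphi_\alpha(V_\alpha)$ then yields
\begin{equation*}
\|u\|_{W^{s,2}(V_\alpha)}\leq C_\alpha\, h_{\Omega_n,M}^{t-s}\,\|u\|_{W^{t,2}(V_\alpha)}.
\end{equation*}
Summing the squared inequalities over the finite atlas after multiplication by $\psi_\alpha$, using the equivalence of $\|u\|_{W^{s,2}(M)}$ with $\bigl(\sum_\alpha\|\psi_\alpha u\|_{W^{s,2}(V_\alpha)}^2\bigr)^{1/2}$ and the fact that multiplication by the smooth bounded functions $\psi_\alpha$ is a bounded operation on $W^{s,2}$ and $W^{t,2}$, gives the claimed bound with $C_M:=\max_\alpha C_\alpha$ times geometric constants, and $h_M$ the minimum of the $h_0$ thresholds from each chart.

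The main obstacle is the boundary effect at the overlaps of charts: the Euclidean ``many zeros'' theorem needs the zero set $Z$ to be fill-dense throughout the Euclidean domain, but $\Omega_n\cap V_\alpha$ is only fill-dense in the manifold sense. The resolution is precisely the nested structure $W_\alpha\Subset V_\alpha$ chosen above, which creates a buffer annulus whose width is bounded below by a constant independent of $n$; for $h_{\Omega_n,M}$ below this fixed threshold, every point of $\varphi_\alpha(V_\alpha)$ within distance $c_2 h_{\Omega_n,M}$ of its boundary is already outside $\varphi_\alpha(W_\alpha)$, while every point in $\varphi_\alpha(W_\alpha)$ has a nearby sample from $\Omega_n$ inside $\varphi_\alpha(V_\alpha)$. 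Carrying through the constants rigorously, together with the interpolation definition of non-integer order Sobolev spaces to handle the full range of real $t$, is the technical heart of the argument.
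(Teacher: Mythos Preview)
The paper does not supply a proof of Theorem~\ref{th:many_zeros_M}; it is stated as a simplified version of a known result and attributed to \cite{wendland,HNW,HNRW}. So there is no in-paper argument to compare your proposal against. Your localization-via-atlas strategy is essentially the one carried out in those references, particularly \cite{HNW,HNRW}: cover $M$ by finitely many charts, compare the Riemannian and Euclidean metrics and Sobolev norms on each chart, transfer the manifold fill distance to a Euclidean fill distance, invoke the flat sampling inequality, and reassemble with a partition of unity.

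One small point worth tightening: as written you apply the Euclidean many-zeros theorem on $\varphi_\alpha(V_\alpha)$ and claim the bound $\|u\|_{W^{s,2}(V_\alpha)}\leq C_\alpha h^{t-s}\|u\|_{W^{t,2}(V_\alpha)}$, but your own buffer argument only establishes Euclidean fill-density of $\varphi_\alpha(\Omega_n\cap V_\alpha)$ inside $\varphi_\alpha(W_\alpha)$, not throughout $\varphi_\alpha(V_\alpha)$ (points near $\partial V_\alpha$ may have their nearest $\Omega_n$-neighbor outside $V_\alpha$). The clean fix is either to apply the Euclidean inequality on the smaller domain $\varphi_\alpha(W_\alpha)$, whose images still cover $M$, or equivalently to apply it to the compactly supported function $\psi_\alpha u$ extended by zero to all of $\varphi_\alpha(V_\alpha)$. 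Either route closes the gap and leaves the rest of your outline intact.
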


\subsection{Relationships between RKH Spaces and Sobolev Spaces} 
\label{ss_RKHSSobolev}
In this paper, we derive the convergence results and approximation rates when the RKH space $H_M$ is embedded in a Sobolev space $W^{r,2}(M)$ for real $r>0$.
When the manifold $M$ is a $d$-dimensional, connected, smooth, Riemannian manifold having a positive radius of injectivity and bounded geometry, by the Sobolev embedding theorem, we have $W^{r,2}(M) \overset{i}{\hookrightarrow} C(M)$ for $r > d/2$. When this is true, we have
\begin{align*}
    |\Ev{x}f|=|f(x)| \leq \|f\|_{C(M)}\leq C \|f\|_{W^{r,2}(M)}.
\end{align*}
This shows that the evaluation functional is bounded, which in turn implies that $W^{r,2}(M)$ is a RKH space when $r> d/2$. A discussion of these results can be found in \cite{HNW,HNRW,HNWpoly}. 

%
%%
%%%
%%%%
%%%%%
\section{APPROXIMATIONS OF THE KOOPMAN OPERATOR $\koop_f$}
\label{sec:koop_approx}
This section presents the principal results of this paper. We present error rates for two different types of approximations of the Koopman operator $U_f$, (i) the projection-based approximation  $U_f^n: = U_f P_{\Omega_n}$, and  (ii) the data-dependent approximation $\dkoop_{f}^n := P_{\Omega_n} ( (P_{\Omega_n} g) \circ f)$.

We define the first approximation of the Koopman operator $U_f^n$ as
\begin{align}
\label{eq:Ufn1}
\koop_{f}^ng=(P_{\Omega_n} g)\circ f,
\end{align}
where $f:M \to M$. When the samples are dense in the manifold $M$, we can express this finite-dimensional approximation using the relation
\begin{align*}
    (\koop_f^n g)(x) & = \sum_{1\leq i,j\leq n} \mathbb{K}_{M,j,i}^{-1} (\Omega_n) g(\xi_i)  \Ker_{M,\xi_j}(f(x)) 
\end{align*}
for all $x \in M$ and $g \in H_M$. In the above identity, the term $\mathbb{K}_M^{-1}(\Omega_n)$ represents the inverse of the Grammian matrix $\mathbb{K}_M(\Omega_n):=[\Ker_M(\xi_m,\xi_n)]$ associated with the finite sample set $\Omega_n$. From the above expression, we note that this approximation of the Koopman operator can be computed only when the function $f$ is explicitly known. 

For the data-driven approximation, we use the second approximation of the Koopman operator $\dkoop_f^n$. In this paper, when  constructing the operator $\dkoop_f^n$, we assume that (i) the samples $\Xi$ are dense in the manifold $M$, and (ii) the pullback space $f^*(H_M)$ is a subset of the RKH space $H_M$. Note that the projection operator $P_{\Omega_n}: H_M \to H_M$. The definition of the approximated Koopman operator $\dkoop_f^n$ makes sense only when the second assumption mentioned above is valid. 
A coordinate representation of the data-dependent approximation is given by 
\begin{equation*}
    \dkoop_{f}^n g:=\sum_{1\leq i,j \leq n} \mathbb{K}_{M,j,i}^{-1} (\Omega_n)  h(\xi_i) \Ker_{M,\xi_j},
\end{equation*}
where
\begin{equation*}
    h(\xi_i):=\sum_{1\leq p,q \leq n} \mathbb{K}_{M,q,p}^{-1} (\Omega_n)  g(\xi_p) \Ker_{M,\xi_q}(f(\xi_i)).
\end{equation*}

\noindent If the function $g$ is defined as $g:=\sum_{1\leq j \leq n}c_j \Ker_{M,\xi_j}$, the explicit representation of $\dkoop_f^n$ is given as
\begin{equation}
\dkoop_{f}^n g:=\sum_{i,j,m} c_j  
\Ker_{M,\xi_j}(y_i)\mathbb{K}_{M,m,i}^{-1}(\Omega_n) \Ker_{M,\xi_m}. \label{eq:coord_Unf}
\end{equation}

\begin{thm}
\label{th:many_M}
Suppose that $M$ is a $d$-dimensional, connected, compact, Riemannian manifold without boundary, let $\Ker_M:M\times M\rightarrow \RR$ be a positive definite kernel that induces a native space $H_M$, and suppose that $H_M$ is equivalent to the Sobolev space $W^{t,2}(M)$ for some $t\in \RR$  that satisfies  $d/2<s \leq \lceil t \rceil - 1$ for a given $s\in \NN$. Then there are constants $C_M,h_M>0$ such that for all $\Omega_n\subset \Omega$ that satisfy $h_{\Omega_n,\Omega}\leq h_M$, we have 
\begin{align*}
    \|\koop_f g - \koop_f^n g\|_{f^*(W^{s,2}(M))} \leq C_M h_{\Omega_n,M}^{t-s}\|g\|_{W^{t,2}(M)}
\end{align*}
for $g\in W^{t,2}(M)$. 
\end{thm}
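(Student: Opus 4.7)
The plan is to reduce the theorem to a direct application of the ``many zeros'' theorem (Theorem \ref{th:many_zeros_M}) on $M$ by exploiting three structural facts: that $\koop_f$ is composition with $f$, that $P_{\Omega_n}$ on an RKH space coincides with the interpolation operator $\IO{n}$, and that $H_M$ is norm-equivalent to $W^{t,2}(M)$.

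First, I would observe that by definition of $\koop_f$ and $\koop_f^n$,
\[
\koop_f g - \koop_f^n g = (g - P_{\Omega_n} g)\circ f.
\]
Setting $u := g - P_{\Omega_n} g$ and using the definition of the pullback norm given in Section~\ref{ss_pullbackRKHS}, we have the immediate upper bound
\[
\|u\circ f\|_{f^*(W^{s,2}(M))}\ \leq\ \|u\|_{W^{s,2}(M)},
\]
since $u\in W^{s,2}(M)$ is one admissible choice of $h$ with $u\circ f = u\circ f$. This converts the problem into the purely ``downstairs'' task of bounding $\|u\|_{W^{s,2}(M)}$ on $M$, independent of the specific $f$.

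The second step invokes the identity $P_{\Omega_n}f = \IO{n}f$ from Section~\ref{sec:interp}: since $P_{\Omega_n}g$ interpolates $g$ at the samples $\xi_i\in\Omega_n$, the residual $u$ satisfies $u|_{\Omega_n}=0$. Because $H_M$ is equivalent to $W^{t,2}(M)$ and $g\in W^{t,2}(M)$, both $g$ and $P_{\Omega_n}g$ belong to $W^{t,2}(M)$, so $u\in W^{t,2}(M)$ as well. The hypotheses $d/2<s\leq\lceil t\rceil-1$ together with the smooth, compact manifold assumption place us exactly in the setting of Theorem~\ref{th:many_zeros_M}, which yields constants $C_M,h_M>0$ such that, whenever $h_{\Omega_n,M}\leq h_M$,
\[
\|u\|_{W^{s,2}(M)}\ \leq\ C_M\, h_{\Omega_n,M}^{t-s}\,\|u\|_{W^{t,2}(M)}.
\]

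Finally, I would close the loop by converting $\|u\|_{W^{t,2}(M)}$ into $\|g\|_{W^{t,2}(M)}$. Since $P_{\Omega_n}$ is an orthogonal projection in the Hilbert space $H_M$, we have the Pythagorean bound $\|u\|_{H_M}=\|g-P_{\Omega_n}g\|_{H_M}\leq\|g\|_{H_M}$; the norm equivalence $H_M \cong W^{t,2}(M)$ then gives $\|u\|_{W^{t,2}(M)}\leq C\,\|g\|_{W^{t,2}(M)}$ for a constant $C$ depending only on the equivalence constants. Chaining the three inequalities above and absorbing constants into $C_M$ yields exactly the stated rate. I do not foresee a serious obstacle here, since each ingredient is already in place in the paper; the only subtlety worth double-checking is that the pullback-norm step does not require any regularity assumption on $f$, which it does not, because passing from $u$ to $u\circ f$ is charged entirely to the definition of $\|\cdot\|_{f^*(W^{s,2}(M))}$ rather than to any composition estimate.
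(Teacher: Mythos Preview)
Your proposal is correct and follows essentially the same route as the paper's proof: both reduce to the many zeros theorem via the pullback-norm bound $\|u\circ f\|_{f^*(W^{s,2}(M))}\leq\|u\|_{W^{s,2}(M)}$ and the identity $P_{\Omega_n}=\IO{n}$. The only difference is that you make explicit the final step of bounding $\|(I-P_{\Omega_n})g\|_{W^{t,2}(M)}$ by $\|g\|_{W^{t,2}(M)}$ via the orthogonal-projection contraction in $H_M$ and the norm equivalence $H_M\cong W^{t,2}(M)$, whereas the paper silently absorbs this into the constant $C_M$.
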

\begin{proof}
Since $s>d/2$, the Sobolev embedding theorem implies that  $W^{s,2}(M)$ is a RKH space, and therefore the pullback space  $f^*(W^{s,2}(M))$ is a well-defined RKH space. By the definition of the pullback space we have 
$$
\|\koop_f g - \koop_f^n g\|_{f^*(W^{s,2}(M))} \leq \|\koop_f\| \|(I-P_{\Omega_n})g\|_{W^{s,2}(M)}.
$$
By definition of the norm of the pullback space, we have
\begin{align*}
%\label{eq:orb2}
\|U_f g\|_{f^*(W^{s,2}(M))}
&=\min\{ \|h\|_{W^{s,2}(M)} \ | \\ & \ \ \ \ \ \ \ \ \  \ \ g\circ f =h\circ f, \ h\in W^{s,2}(M) \} \\
&\leq \|g\|_{W^{s,2}(M)}, 
\end{align*}
which implies that $\|\koop_f\| \leq 1$. Additionally, we know that $((I-P_{\Omega_n})g)|_{\Omega_n}=0$ on $\Omega_n$ since the projection is identical to the interpolant over $\Omega_n$. By the many zeros Theorem \ref{th:many_zeros_M}, we conclude that 
$$
\|\koop_f g - \koop_f^n g\|_{f^*(W^{s,2}(M))} \leq C_Mh_{\Omega_n,M}^{t-s}\|g\|_{W^{t,2}(M)}.
$$
\end{proof}
The bound above is stated in terms of the norm on the pullback space $f^*(H_M)$, which may seem rather abstract.  The following corollary illustrates that such a bound naturally leads to a more intuitive pointwise bound. 
\begin{cor}
\label{thm:koopratePW}
Suppose that the hypotheses of Theorem \ref{th:many_M} hold. There are constants $C_M,h_M>0$ such that for all $\Omega_n\subset \Omega$ that satisfy $h_{\Omega_n,\Omega}\leq h_M$, we have 
the pointwise bound 
\begin{align*}
    |(\koop_f &g)(x) - (\koop_f^ng)(x)| \leq C_M h_{\Omega_n,M}^{t-s} \|g\|_{W^{t,2}(M)}
\end{align*}
for all $g\in W^{t,2}(M)$ and $x\in M$. 
\end{cor}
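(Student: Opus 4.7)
The plan is to derive the pointwise bound by transferring the pullback-norm estimate of Theorem \ref{th:many_M} through a uniformly bounded evaluation functional on $W^{s,2}(M)$, which is available because the Sobolev embedding theorem applies under the standing assumption $s>d/2$.

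First, I would observe that $u := \koop_f g - \koop_f^n g$ lies in the pullback space $f^*(W^{s,2}(M))$, since $\koop_f g = g\circ f$ and $\koop_f^n g = (P_{\Omega_n}g)\circ f$ are both of the form $h\circ f$ with $h\in W^{s,2}(M)$. By the definition of the pullback norm recalled in Section \ref{ss_pullbackRKHS}, there exists (by compactness and lower semicontinuity, or by the realizer of the minimum) some $h\in W^{s,2}(M)$ with $u = h\circ f$ and $\|h\|_{W^{s,2}(M)} = \|u\|_{f^*(W^{s,2}(M))}$. Concretely one may simply take $h = g - P_{\Omega_n}g$.

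Second, since $s>d/2$, the Sobolev embedding theorem stated in Section \ref{ss_RKHSSobolev} gives $W^{s,2}(M)\hookrightarrow C(M)$, so there is a constant $C>0$ with $\|\Ev{y}\|\leq C$ uniformly in $y\in M$. Evaluating at $y=f(x)$,
\begin{align*}
|u(x)| = |h(f(x))| \leq C\,\|h\|_{W^{s,2}(M)} = C\,\|u\|_{f^*(W^{s,2}(M))}.
\end{align*}
Combining this with the bound of Theorem \ref{th:many_M} yields
\begin{align*}
|(\koop_f g)(x) - (\koop_f^n g)(x)| \leq C\,C_M\, h_{\Omega_n,M}^{t-s}\,\|g\|_{W^{t,2}(M)},
\end{align*}
and absorbing the product of constants into a new $C_M$ finishes the proof. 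There is no real obstacle: the only subtlety is making sure that the realizer of the pullback norm is the natural choice $h = g - P_{\Omega_n}g$ (so that we do not need any nontrivial existence argument for the minimizer), and that the Sobolev embedding constant is independent of the evaluation point, which is guaranteed by the compactness and bounded geometry of $M$ as assumed in Theorem \ref{th:many_M}.
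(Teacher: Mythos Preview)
Your argument is correct, but it takes a slightly different route than the paper. The paper does not pass through the pullback norm or Theorem~\ref{th:many_M} at all: it writes
\[
|(\koop_f g)(x) - (\koop_f^n g)(x)| = |((I-P_{\Omega_n})g)(f(x))| \leq \|(I-P_{\Omega_n})g\|_{C(M)},
\]
then applies the Sobolev embedding $W^{s,2}(M)\hookrightarrow C(M)$ to reach $\|(I-P_{\Omega_n})g\|_{W^{s,2}(M)}$, and finally invokes the many zeros Theorem~\ref{th:many_zeros_M} directly. Your version instead bounds the pointwise evaluation by the pullback norm $\|u\|_{f^*(W^{s,2}(M))}$ and then cites Theorem~\ref{th:many_M}. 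Both arguments rest on the same two ingredients (Sobolev embedding for $s>d/2$ and the many zeros bound), so the difference is organizational: the paper's route is shorter because it avoids the pullback machinery, while yours makes explicit that the corollary is a formal consequence of Theorem~\ref{th:many_M}.

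One small imprecision: your claim that the natural candidate $h=g-P_{\Omega_n}g$ actually \emph{realizes} the pullback minimum is not justified (and need not hold if $f$ is not injective). This does not damage the proof, since either you use the abstract minimizer $h^\star$ and then Theorem~\ref{th:many_M}, or you use $h=g-P_{\Omega_n}g$ and then bound $\|(I-P_{\Omega_n})g\|_{W^{s,2}(M)}$ directly via Theorem~\ref{th:many_zeros_M}; but the two should not be conflated in a single equality.
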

% \textcolor{red}{STP: do we need "Additionally suppose that the pullback space $f^*(H_M) \subseteq H_M$. " I don't think so, but please verify.}
\begin{proof}
First, we note that
\begin{align*}
    |(\koop_f &g)(x) - (\koop_f^ng)(x)| 
    = |g(f(x)) - (P_{\Omega_n}g)(f(x))| \\
    & \leq \sup_{\eta \in M} |g(\eta) - (P_{\Omega_n}g) (\eta)| = \| (I - P_{\Omega_n}) g \|_{C(M)}.
\end{align*}
Since $s > d/2$, by the Sobolev embedding theorem, there exists a constant $K$ such that
$
    \| (I - P_{\Omega_n}) g \|_{C(M)} \leq K \| (I - P_{\Omega_n}) g \|_{W^{s,2}(M)}.
$
From Theorem \ref{th:many_zeros_M}, we can conclude that 
\begin{align*}
    |(\koop_f &g)(x) - (\koop_f^ng)(x)| \leq C_M h_{\Omega_n,M}^{t-s} \|g\|_{W^{t,2}(M)}.
\end{align*}
\end{proof}

The final, principal result of this paper uses the above to derive pointwise bounds for data-driven approximations of the Koopman operator. 
\begin{thm}
\label{th:many_M_data}
Suppose that the hypotheses of Theorem \ref{th:many_zeros_M} holds.  Furthermore, suppose that the mapping $f:M\rightarrow M$ is such that the pullback space $f^*(H_M)\subseteq H_M$. Then there are constants $C_M,h_M>0$ such that for all $\Omega_n\subset \Omega$ that satisfy $h_{\Omega_n,\Omega}\leq h_M$, we have 
the pointwise bound 
\begin{align*}
        |(\koop_f &g)(x) - (\dkoop_f^ng)(x)| \\
    &\leq C_M h_{\Omega_n,M}^{t-s}\left ( \|g\|_{W^{t,2}(M)} + \|\koop_f  g\|_{W^{t,2}(M)} \right )
\end{align*}
for $g\in W^{t,2}(M)$ and $x\in M$. 
\end{thm}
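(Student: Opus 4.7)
My plan is to decompose the error via the algebraic identity
\[
\koop_f g - \dkoop_f^n g = (I - P_{\Omega_n})(\koop_f g) + P_{\Omega_n}\koop_f\bigl((I - P_{\Omega_n})g\bigr),
\]
obtained by adding and subtracting $P_{\Omega_n}(\koop_f g)$ and using that $\koop_f(P_{\Omega_n}g) = (P_{\Omega_n}g)\circ f$. The first summand vanishes on $\Omega_n$ because in the RKH setting the orthogonal projection coincides with the interpolant, so the argument in the proof of Corollary \ref{thm:koopratePW} applies verbatim with $g$ replaced by $\koop_f g$: Sobolev embedding followed by the many-zeros theorem (Theorem \ref{th:many_zeros_M}) yields
\[
|(I-P_{\Omega_n})(\koop_f g)(x)| \leq C_M\, h^{t-s}_{\Omega_n,M}\|\koop_f g\|_{W^{t,2}(M)},
\]
which already accounts for the $\|\koop_f g\|_{W^{t,2}(M)}$ half of the target bound.

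For the second summand I would split once more via $P_{\Omega_n} = I - (I-P_{\Omega_n})$, giving
\[
P_{\Omega_n}\koop_f\bigl((I-P_{\Omega_n})g\bigr) = \koop_f\bigl((I-P_{\Omega_n})g\bigr) - (I-P_{\Omega_n})\koop_f\bigl((I-P_{\Omega_n})g\bigr).
\]
The first piece equals $((I-P_{\Omega_n})g)\circ f$, so its pointwise absolute value is dominated by $\|(I-P_{\Omega_n})g\|_{C(M)}$; since $(I-P_{\Omega_n})g$ vanishes on $\Omega_n$, the Sobolev embedding combined with Theorem \ref{th:many_zeros_M} yields the bound $C_M\, h^{t-s}_{\Omega_n,M}\|g\|_{W^{t,2}(M)}$. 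For the second piece, the function $(I-P_{\Omega_n})\koop_f((I-P_{\Omega_n})g)$ also vanishes on $\Omega_n$, so the same Sobolev-plus-many-zeros route reduces matters to controlling $\|\koop_f((I-P_{\Omega_n})g)\|_{W^{t,2}(M)}$ by a constant multiple of $\|g\|_{W^{t,2}(M)}$.

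The main obstacle, and the step where the pullback-inclusion assumption $f^*(H_M)\subseteq H_M$ is actually used, is establishing that $\koop_f$ is a bounded linear operator on $W^{t,2}(M)$. My plan is to argue that $\koop_f:H_M\to H_M$ is well-defined by the inclusion and closed because pointwise convergence follows from the boundedness of the evaluation functionals, hence bounded by the closed-graph theorem; the standing norm equivalence $H_M\cong W^{t,2}(M)$ then transfers this boundedness to $W^{t,2}(M)$. The same equivalence, combined with the $H_M$-contractivity of the orthogonal projection $P_{\Omega_n}$, makes $P_{\Omega_n}$ uniformly bounded on $W^{t,2}(M)$, so that $\|(I-P_{\Omega_n})g\|_{W^{t,2}(M)} \leq C\|g\|_{W^{t,2}(M)}$. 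Assembling the three estimates and applying the triangle inequality to the original decomposition yields the claimed pointwise bound, with the two $W^{t,2}$ terms on the right-hand side arising from the first summand and the two sub-pieces of the second summand, respectively.
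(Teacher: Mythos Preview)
Your argument is correct, but it pivots on a different intermediate term than the paper's. The paper adds and subtracts $(P_{\Omega_n}g)\circ f$, obtaining the two–term split
\[
\koop_f g - \dkoop_f^n g \;=\; \koop_f\bigl((I-P_{\Omega_n})g\bigr) \;+\; (I-P_{\Omega_n})\bigl((P_{\Omega_n}g)\circ f\bigr),
\]
whereas you add and subtract $P_{\Omega_n}(\koop_f g)$. In the paper's decomposition both pieces are handled in one shot: the first is bounded pointwise by $\|(I-P_{\Omega_n})g\|_{C(M)}$, and the second already vanishes on $\Omega_n$, so Sobolev embedding followed by the many-zeros theorem yields $h_{\Omega_n,M}^{t-s}\|g\|_{W^{t,2}(M)}$ and $h_{\Omega_n,M}^{t-s}\|P_{\Omega_n}g\circ f\|_{W^{t,2}(M)}$, respectively. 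Your second summand $P_{\Omega_n}\koop_f\bigl((I-P_{\Omega_n})g\bigr)$ does \emph{not} vanish on $\Omega_n$, which forces the additional split and, in turn, the closed-graph argument establishing boundedness of $\koop_f$ on $H_M\approx W^{t,2}(M)$. The benefit of your longer route is that every inequality is fully accounted for; the paper's closing step, replacing $\|P_{\Omega_n}g\circ f\|_{W^{t,2}(M)}$ by $\|\koop_f g\|_{W^{t,2}(M)}$, is asserted without justification and in fact tacitly relies on exactly the operator-boundedness and norm-equivalence considerations that you make explicit.
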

\begin{proof}
Since $f^*(H_M)\subseteq H_M$, we know that $P_{\Omega_n}g\in H_M$ and $P_{\Omega_n}g\circ f\in f^*(H_M)\subset H_M$. By definition, we have
\begin{align*}
    |(\koop_f g)(x) - (\dkoop_f^n &g)(x)| \\
    &:= \left |(g\circ f)(x)- (P_{\Omega_n}\left ((P_{\Omega_n} g)\circ f)\right )(x)\right | \\
    &\leq |(g\circ f)(x)-(\Pro{n}g)\circ f)(x)  | \\
    & \qquad + |((I-\Pro{n})(\Pro{n}g\circ f))(x) |.
\end{align*}
Under the hypotheses of this theorem, we have 
$
H_M\approx W^{t,2}(M)\hookrightarrow W^{s,2}(M) \hookrightarrow  C(M). 
$
This implies that there are positive constants $K_1$ and $K_2$ such that 
\begin{align*}
 |(&\koop_f g)(x) - (\dkoop_f^n g)(x)| \\
  &\leq \sup_{\eta\in M} |(((g-\Pro{n}g)\circ f)(\eta)|  \\
  & \qquad + \sup_{\eta\in M} |((I-\Pro{n})(\Pro{n}g\circ f))(\eta) | \\
  &\leq \| (I-\Pro{n})g\|_{C(M)} + \| (I-\Pro{n})(\Pro{n}g\circ f) \|_{C(M)} \\
  &\leq K_1 \| (I-\Pro{n})g\|_{W^{s,2}(M)} 
  \\
  & \qquad + K_2 \| (I-\Pro{n})(\Pro{n}g\circ f) \|_{W^{s,2}(M)}
\end{align*}
Now we apply the many zeros Theorem \ref{th:many_M} to each of the right hand side terms above. We know that 
$
\left (I-\Pro{n})g \right)|_{\Omega_n}=0,
\left (I-\Pro{n})(P_{\Omega_n}g\circ f) \right)|_{\Omega_n}=0 
$
since the projection operator $\Pro{n}$ is identical to the interpolation operator on $\Omega_n\subset M$. By the many zeros theorem on the manifold $M$, we get
\begin{align*}
|(&\koop_f g)(x)- (\dkoop_f^n g)(x)|\\
& \leq C_1 h_{\Omega_n,M}^{t-s}\|g\|_{W^{t,2}(M)}  + C_2 h^{t-s}_{\Omega_n,M}\|\Pro{n}g\circ f\|_{W^{t,2}(M)} \\
&\leq \max \{C_1,C_2\} h^{t-s}_{\Omega_n,M} \left ( \|g\|_{W^{t,2}(M)} + \|\koop_f  g\|_{W^{t,2}(M)} \right ).
\end{align*}
\end{proof}

\section{NUMERICAL EXAMPLE}
\label{sec:examples}
\begin{figure}%[th!]
    \centering
    \includegraphics[width=.4\textwidth]{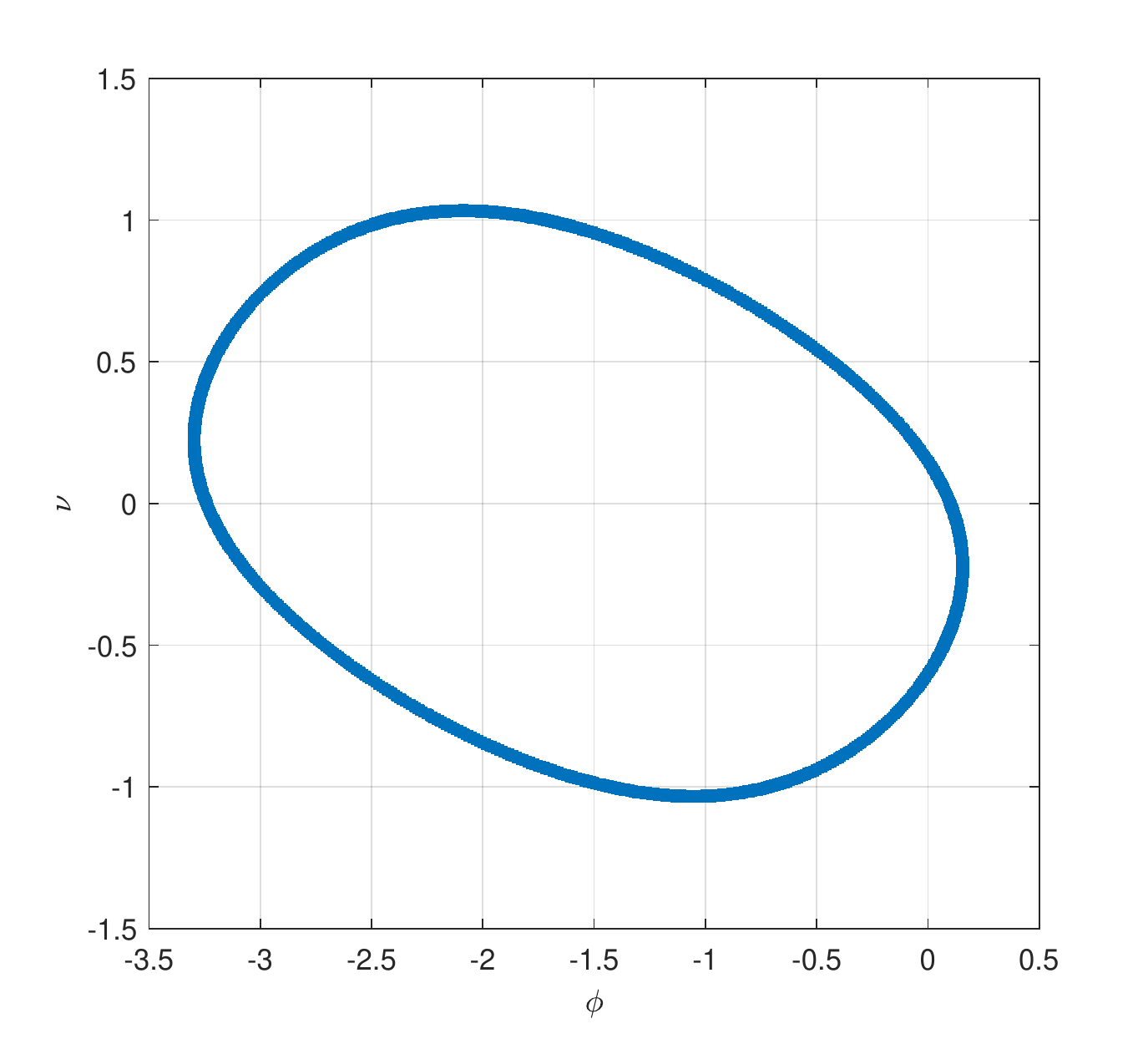}
    \caption{The Discrete Trajectory}
    \label{fig:figPhasePlot}
\end{figure}

In this section, we study the application of the derived bounds on rates of convergence to the classical model of a bouncing ball on a vibrating surface. 
The difference equation that defines  the state trajectory is given by
\begin{align*}
\phi_{j+1} &= \phi_j + \nu_j, \\
\nu_{j+1} &= \alpha \nu_j - \gamma \cos(\phi_j + \nu_j),
\end{align*}
where $\phi$ and $\nu$ are the nondimensional impact time and the velocity after impact, respectively. The constants $\alpha$ and $\gamma$  in the above equation represent the dissipation coefficient and force amplitude, respectively. We refer the reader to \cite{Holmes1982} for a more detailed discussion of this dynamical system. Figure \ref{fig:figPhasePlot} shows the state trajectory generated by this system when $\alpha = 1$ and $\gamma = 0.45$, $[\phi_0,\nu_0]^T = [0.1,0]^T$ after $1024$ iterations. 
The function $f:\mathbb{R}^2 \to \mathbb{R}^2$ in this case is given by $f([\phi_j,\nu_j]) \to [\phi_{j+1},\nu_{j+1}]^T$. For purposes of illustration, we choose the observable  function $g:\mathbb{R}^2 \to \mathbb{R}$ defined as
$
g([\phi,\nu]^T) = \phi + \nu.
$

\subsection{Challenges to Intrinsic Approximations}
This example has been selected in part to emphasize some of the inherent difficulties when seeking to generate bounds on rates of approximation of Koopman operators by intrinsic methods. In view of Figure \ref{fig:figPhasePlot}, it seems reasonable to believe that the samples $\Xi$ are dense in a smooth, one-dimensional, regularly embedded submanifold $M$ of $X:=\RR^2$. Even though this example is exceptionally straightforward, where the mapping $f:X\rightarrow X$ and the observable $g:X\rightarrow \RR$ are known in closed form, it remains  difficult to employ the bounds in Theorems \ref{th:many_M} through \ref{th:many_M_data} in an intrinsic approximation over $M$. To employ the results of these theorems, we would first need to define some Riemannian metric on $M$. Theoretically this is always possible if $M$ is a smooth manifold.   Subsequently we must  define an appropriate kernel $\mathfrak{K}_M:M\times M \rightarrow \RR$ whose native space is equivalent to a Sobolev space. {\em In principle}, this too can be accomplished.  For example, we can solve for the fundamental solution of a sufficiently high order of the Laplace-Beltrami operator over $M$, which could then be taken as the kernel of $H_M$. By definition we would obtain $H_M\approx W^{t,2}(M)$ for some $t>0$, see \cite{HNW} and the references therein for a general  discussion.  With such a definition of the kernel $\Ker_M:M\times M \rightarrow \RR$, the  results of the theorems in this paper would then apply.  However, even in this remarkably simple example, it is no simple feat to solve for the fundamental solution over $M$. Pragmatically speaking, we do not have a closed form expression for an atlas for $M$, and consequently we cannot solve the coordinate representations of the equation defining the fundamental solution. It would seem that constructing a kernel that is intrinsic to $M$ would be prohibitively difficult in this case.  

We should note of course, that not all examples pose such problems for intrinsic approximations. If  the samples $\Xi$ of the semiflow  are dense in some well-known manifold for which the solution of the Laplace-Beltrami operator equation is known, then the approximations and theorems in this paper are directly applicable. For example, there are a number of classical examples of continuous semiflows whose orbits are dense in the torus. In such a case $X=\Omega=M$ is the torus. It is always assumed that the state space $X$ is known {\em a priori}. The  kernels over the torus  are known in closed form, and these expressions can be used directly in Koopman operator approximations.

\subsection{Explicit Approximations}
Fortunately, the theorems in this paper are easily applied for certain types of extrinsic approximations. We briefly outline the process. 
%which is discussed in full detail in \cite{jiafull2020}.
The Sobolev-Matern kernels $\mathfrak{K}_{X,\nu}:X\times X \rightarrow \RR$  on $X=\RR^2$ are known in closed form, and they induce a native space $H_X$ that is contained in the Sobolev space $W^{\tau,2}(\RR^p)$ for $\tau < 2 \nu - p/2$. Note, the term $\nu$ is a  positive parameter that defines a family of Sobolev-Matern kernels. By the trace theorem, the restriction $\mathfrak{K}_{M,\nu}:=\mathfrak{K}_X|_M$ of the kernel $\mathfrak{K}_{X,\nu}$ induces a native space $H_M$ over the manifold $M$ that is contained in the Sobolev space $W^{t,2}(M)$, where $t<\tau-(p-d)/2$. In our example, a 1-dimensional manifold is contained in $\RR^2$, and hence $p = 2$ and $d = 1$. Note that there is some loss of smoothness in restricting functions in $H_X\approx W^{\tau,2}(X)$ to the regularly embedded submanifold $M$ in that  $H_M\approx W^{t,2}(M)$.   

In this simulation, we use the Sobolev-Matern kernel $\mathfrak{K}_{X,\nu=5/2}$, which has the form $\mathfrak{K}_{X,\nu=5/2}(x,y)= \mathcal{K}(\|x-y\|)$, where
\begin{align*}
    \mathcal{K}(r) := \left( 1 + \frac{\sqrt{5}r}{l} + \frac{5 r^2}{3 l^2} \right) \exp{\left( - \frac{\sqrt{5}r}{l} \right)}.
\end{align*} 
In the above equation, the term $l$ is a positive parameter, and we obtained the numerical results of this paper with $l = 1e-1$. Note that the above kernel is defined over $X = \mathbb{R}^2$ and its RKH space is contained in $W^{\tau,2} (\RR^2)$, where $\tau < 4$.

\begin{figure}%[th!]
    \centering
    \includegraphics[width=.4\textwidth]{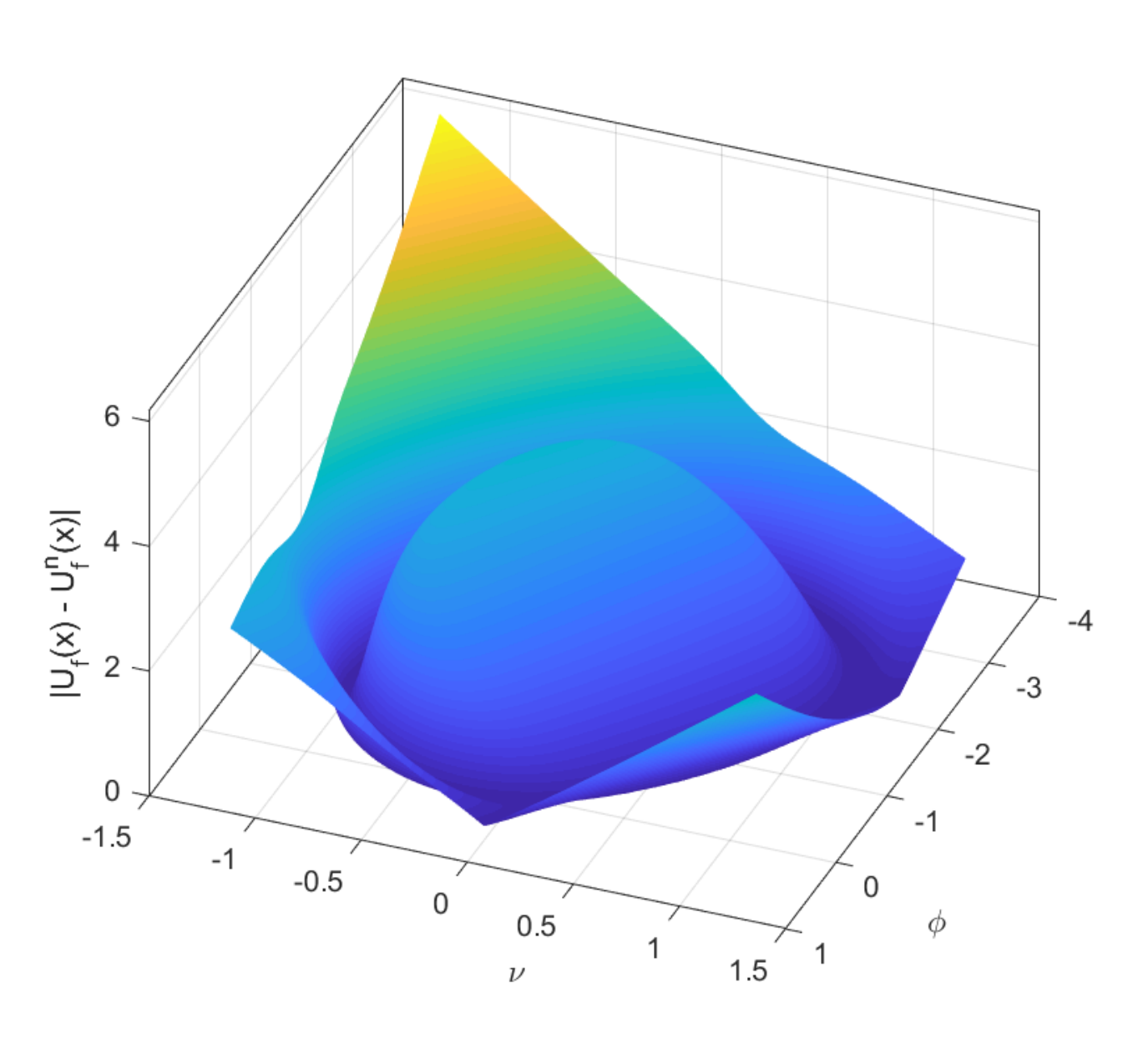}
    \caption{Pointwise Error, $\koop_f^n$ Approximation}
    \label{figErr3D}
\end{figure}

\begin{figure}%[th!]
    \centering
    \includegraphics[width=.4\textwidth]{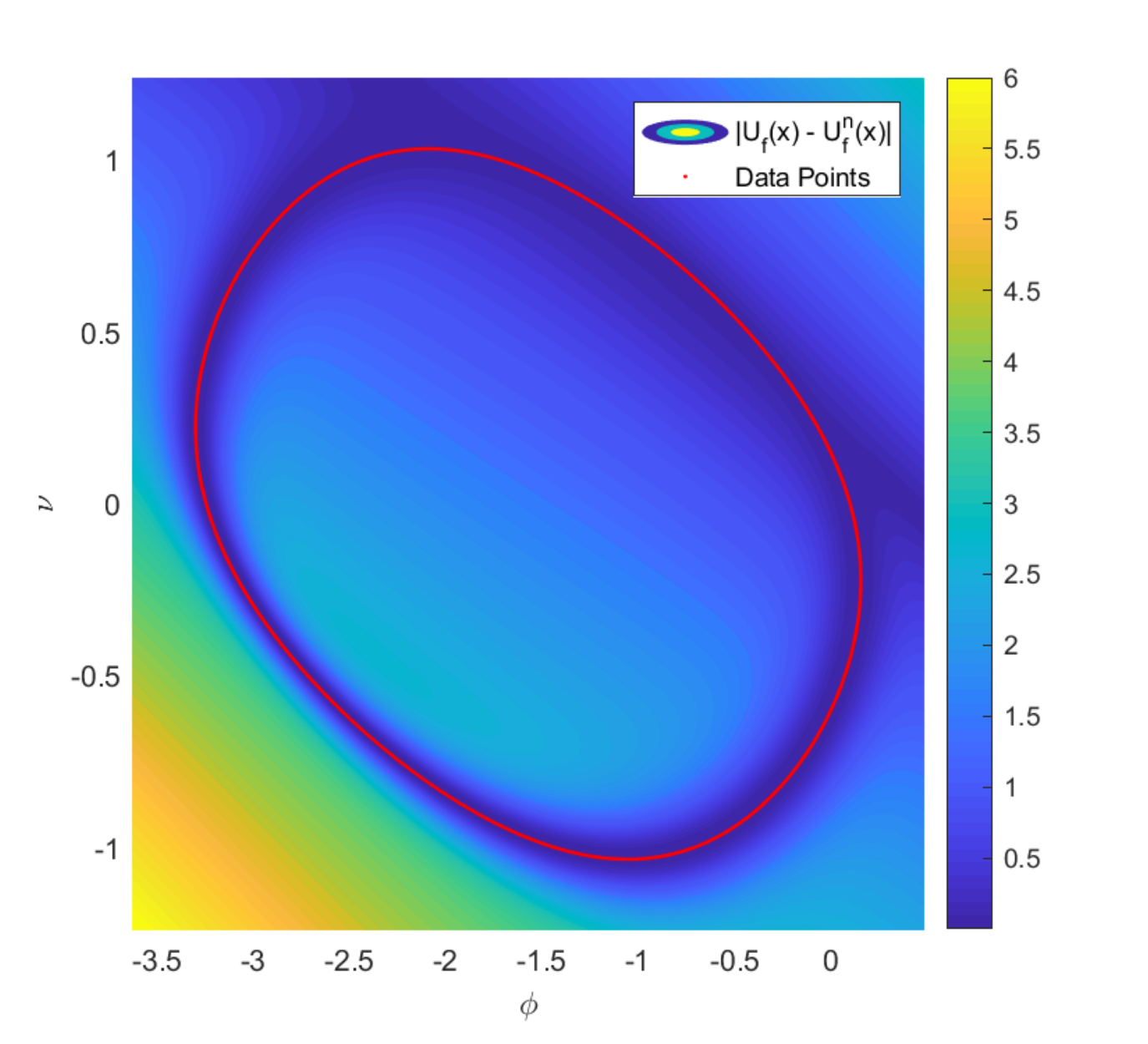}
    \caption{Error Contour,  $\koop_f^n$ Approximation}
    \label{figErrCont}
\end{figure}

The pointwise error $|U_f(x) - U_f^n(x)|$ in $\mathbb{R}^2$ for $n = 768$ is shown in Figure \ref{figErr3D}. The kernels for this simulation were centered at the first $768$ data points generated by the dynamical system. Figure \ref{figErrCont} shows the error contour. As expected, the error is minimized over the manifold. The error plots for the data driven approximation of the Koopman operator is similar.

Figure \ref{figKoopInfBnd} shows how the $C^\infty$-norm error $\|U_f - U_f^n\|_{\infty}$ varies as the fill distance $h$ is decreased. Since the manifold M is not explicitly defined, we use the Euclidean metric to calculate the fill distance $h$. It is straightforward to show that the Euclidean metric is equivalent to the intrinsic metric of $M$ since $M$ is a regularly embedded manifold. Since we are plotting the variables on a log scale, the slope of the error lines should be less than or equal to $t-s < 2.5$. Note, the constant $t<3.5$ is defined by the choice of the kernel and the constant $s$ satisfies $d/2 = 0.5 < s = 1 \leq \lceil 3.5 \rceil - 1$. Figure \ref{figDataKoopInfBnd} shows the equivalent plot for the data-driven Koopman approximation. From these plots, it is clear that the error decays at a rate higher than the worst-case theoretical bound of $t-s < 2.5$.

% \begin{figure}%[th!]
%     \centering
%     \includegraphics[width=.4\textwidth]{}
%     \caption{Pointwise Error, $\koop_f^n$ Approximation}
%     \label{figKoopPWBnd}
% \end{figure}

% \begin{figure}%[th!]
%     \centering
%     \includegraphics[width=.4\textwidth]{}
%     \caption{Pointwise Error, $\dkoop_f^n$ Approximation ($\dkoop_f^n \equiv U_{d,f}^n$)}
%     \label{figDataKoopPWBnd}
% \end{figure}

\begin{figure}%[th!]
    \centering
    \includegraphics[width=.4\textwidth]{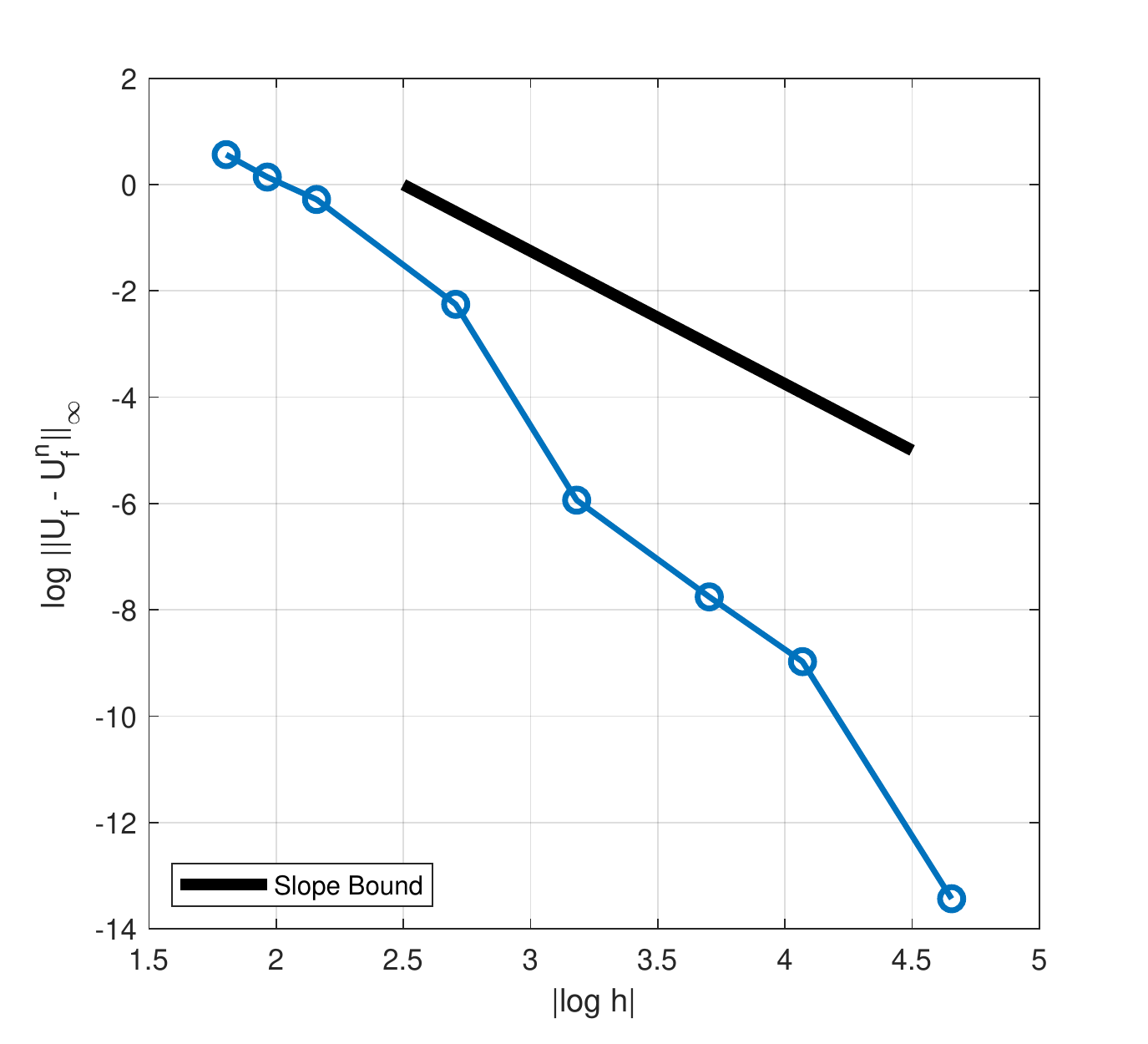}
    \caption{$C^\infty$-norm Error, $\koop_f^n$ Approximation}
    \label{figKoopInfBnd}
\end{figure}

\begin{figure}%[th!]
    \centering
    \includegraphics[width=.4\textwidth]{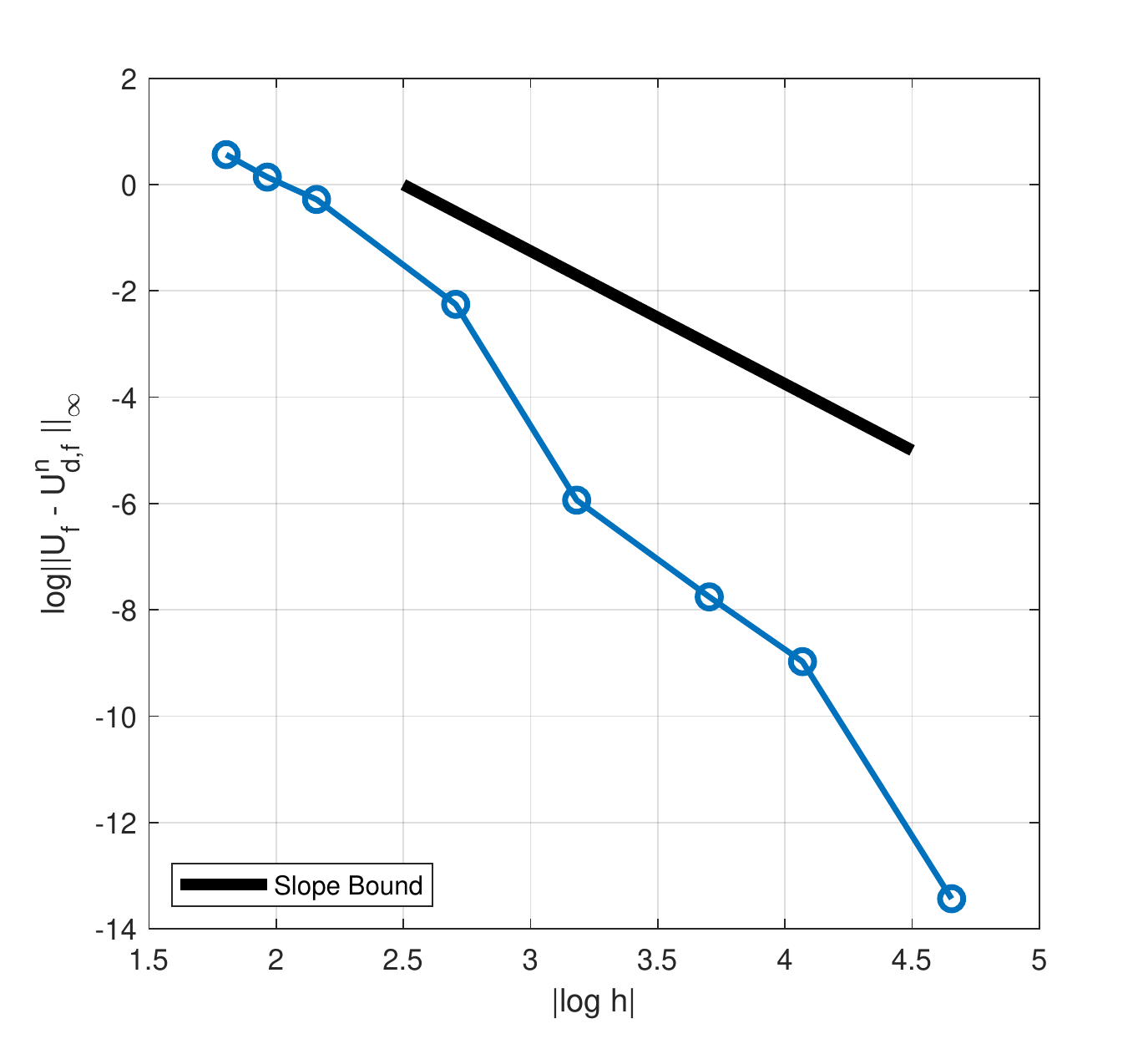}
    \caption{$C^\infty$-norm Error, $\dkoop_f^n$ Approximation ($\dkoop_f^n \equiv U_{d,f}^n$)}
    \label{figDataKoopInfBnd}
\end{figure}

\section{CONCLUSIONS AND FUTURE WORK}

This paper has derived explicit error bounds on projection-based and data-driven approximations $\koop_f^n$ and $\dkoop_f^n$, respectively, of the Koopman operator $\koop_f$ when the samples $\Xi$ are dense in a smooth Riemannian manifold $M$ and the number of samples $m$ is equal to the dimension of the space of approximants $n$. Numerical studies illustrate the qualitative nature of the convergence rates: convergence is achieved over the manifold $M$ and the rate of convergence is bounded above by the expressions derived in the theorems that depend on the fill distance.

While the numerical results do provide some validation of the theoretical results, they are preliminary and illustrate worst-case performance of the Koopman operator approximations.  Since functions $f$ and  $g$ are quite smooth, the rates of convergence of $\koop_f^ng$ and $\dkoop_f^ng$ to $\koop_f g$  are much faster than the worst-case bounds.  Future numerical studies should investigate how convergence rates vary with more irregular or nonsmooth functions $f$ and $g$.

\bibliographystyle{IEEEtran}
\bibliography{approx1}

\end{document}